\documentclass[conf]{new-aiaa}
\usepackage[utf8]{inputenc}

\usepackage{graphicx}
\usepackage{amsmath}

\usepackage{Theorems}
\usepackage[version=4]{mhchem}
\usepackage{siunitx}
\usepackage{longtable,tabularx}
\setlength\LTleft{0pt} 
\sloppy
\usepackage{color}

\DeclareMathOperator{\vspan}{span}

\title{An occupation kernel approach to optimal control}

\author{Rushikesh Kamalapurkar\footnote{Assistant Professor, School of Mechanical Engineering, Oklahoma State University, Stillwater, OK 74078.} and Joel A. Rosenfeld\footnote{Assistant Professor, Department of Mathematics and Statistics, University of South Florida, Tampa, Fl 33620.}}

\begin{document}

\maketitle

\begin{abstract}
In this effort, a novel operator theoretic framework is developed for data-driven solution of optimal control problems. The developed methods focus on the use of trajectories (i.e., time-series) as the fundamental unit of data for the resolution of optimal control problems in dynamical systems. Trajectory information in the dynamical systems is embedded in a reproducing kernel Hilbert space (RKHS) through what are called occupation kernels. The occupation kernels are tied to the dynamics of the system through the densely defined Liouville operator. The pairing of Liouville operators and occupation kernels allows for lifting of nonlinear finite-dimensional optimal control problems into the space of infinite-dimensional linear programs over RKHSs.
\end{abstract}



\section{Introduction}
\lettrine{N}{umerical}  solutions of optimal control problems are obtained by using Pontryagin's maximum principle \cite{SCC.Pontryagin.Boltyanskii.ea1962} to convert the optimal control problem into a two-point boundary value problem \cite{SCC.Stryk.Bulirsch1992,SCC.Betts1998} or a nonlinear programming problem \cite{SCC.Hargraves.Paris1987,SCC.Huntington2007,SCC.Fahroo.Ross2008,SCC.Rao.Benson.ea2010,SCC.Darby.Hager.ea2011,SCC.Garg.Hager.ea2011}. While there is a rich history of literature on the topic of numerical optimal control, the computational efficiency of numerical optimal control is limited by that of nonlinear programming, where solutions of large problems can be computationally prohibitive and the solutions, when available, are typically only locally optimal.

Based on the seminal work of Lasserre \cite{SCC.Lasserre2010} on moments and positive polynomials, occupation measure approaches that convert a nonlinear optimal control problem into an infinite dimensional linear program that can be efficiently solved using sum of squares based convex programming methods were developed in results such as \cite{SCC.Lasserre.Henrion.ea2008,SCC.Majumdar.Vasudevan.ea2014,SCC.Claeys.Daafouz.ea2016,SCC.Zhao.Mohan.ea2017}.

While computationally efficient, techniques that utilize occupation measures are typically only applicable to systems where the functions that describe the dynamics, the cost functions, and the constraint sets are polynomials. The techniques developed in this paper also convert finite dimensional nonlinear optimal control problems into infinite dimensional linear programs, but utilize a reproducing kernel Hilbert space framework. An advantage of framing the infinite dimensional linear program within the reproducing kernel Hilbert space framework is that the developed tools are applicable to optimal control problems with a broader range of cost functions and constraining sets. Principally, the advantage is realized by exchanging the moment problem for occupation measures with the more flexible approximation abilities of reproducing kernel Hilbert spaces.

\section{Reproducing Kernel Hilbert Spaces}

\begin{definition}
    A real-valued \emph{reproducing kernel Hilbert space} (RKHS), $H$, over a set $X \subset \mathbb{R}^n$ is a Hilbert space of functions $f: X \mapsto \mathbb{R}$ such that for every $x\in X$, the evaluation functional $E_x f := f(x)$ is bounded.
\end{definition}
By the Reisz representation theorem, for each $x \in X$ there is a corresponding function $ k_x \in H $ such that $ \langle f, k_x \rangle_H = f(x) $, where $\langle f,g \rangle_H$ denotes the inner product. For each RKHS, there is a uniquely identified \emph{kernel function}, $K(x,y) := \langle k_y, k_x \rangle_H$, such that for any finite collection of points, $\{ x_i \}_{i=1}^M$, the corresponding Gram matrix, $(K(x_i,x_j))_{i,j=1}^M$, is positive semi-definite.

The importance of RKHSs lies in their ability to perform as function approximators. In particular, just as the collection of polynomials is dense inside of the space of continuous functions over compact subsets of $\mathbb{R}^n$, \emph{universal} RKHSs are those spaces that are also dense in the space of continuous functions over compact subsets of $\mathbb{R}^n$. Moreover, the following lemma demonstrates that it is sufficient to consider linear combinations of the kernel functions themselves for function approximation when the kernel is in a universal RKHS.
\begin{lemma}
    Consider the subset $S := \{ K(\cdot,y) : y \in X \}$ of a RKHS $H$ over a set $X$ with kernel $K$. Then $\vspan S$ is dense in $H$ with respect to the Hilbert space norm. Moreover, if $K$ is continuous, then $\vspan S$ is dense in $H$ with respect to the uniform norm over restrictions to compact subsets of $X$.
\end{lemma}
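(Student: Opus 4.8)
The plan is to handle the two claims separately, using the reproducing property $\langle f, K(\cdot,y)\rangle_H = f(y)$ throughout (recall that $K(\cdot,y) = k_y$, the Riesz representative of the evaluation functional). For the first claim, the cleanest route is to show that the orthogonal complement of $\vspan S$ is trivial, from which density in the Hilbert-space norm follows immediately. Specifically, I would take any $f \in H$ with $f \perp \vspan S$, which by the reproducing property means $f(y) = \langle f, K(\cdot,y)\rangle_H = 0$ for every $y \in X$, forcing $f = 0$. Since the closure of a subspace and its double orthogonal complement coincide, $\overline{\vspan S} = H$.

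For the second claim, the key observation is that continuity of $K$ converts Hilbert-space convergence into uniform convergence on compact sets. I would start from the pointwise estimate
$$|f(x) - g(x)| = \left| \langle f - g, K(\cdot,x) \rangle_H \right| \le \|f - g\|_H \, \|K(\cdot,x)\|_H,$$
obtained from the reproducing property and Cauchy--Schwarz. Noting that $\|K(\cdot,x)\|_H^2 = \langle K(\cdot,x), K(\cdot,x)\rangle_H = K(x,x)$, and that $x \mapsto K(x,x)$ is continuous and therefore bounded by some constant $M_C < \infty$ on any compact set $C \subset X$, this yields
$$\sup_{x \in C} |f(x) - g(x)| \le \sqrt{M_C}\, \|f - g\|_H.$$

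Finally, I would combine the two parts: given $f \in H$, a compact set $C$, and $\varepsilon > 0$, the first claim produces $g \in \vspan S$ with $\|f - g\|_H < \varepsilon / \sqrt{M_C}$, and the displayed bound then gives $\sup_{x \in C} |f(x) - g(x)| < \varepsilon$, establishing uniform density on $C$. I expect the only genuine subtlety to lie in the second claim's reliance on the uniform bound $\sup_{x \in C} K(x,x) < \infty$; this is precisely where continuity of $K$ is essential, since without it the factor $\|K(\cdot,x)\|_H$ could be unbounded over $C$ and the Hilbert-space approximation would fail to transfer to uniform approximation. Everything else reduces to direct applications of the reproducing property and Cauchy--Schwarz.
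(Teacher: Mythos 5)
Your proof is correct, but it is not the paper's proof: the paper gives no argument at all, deferring entirely to \cite[Theorem 4.21]{SCC.Steinwart.Christmann2008}. What you have written is a correct, self-contained substitute for that citation. The first half---showing $(\vspan S)^\perp = \{0\}$ because $f \perp K(\cdot,y)$ forces $f(y) = 0$ for every $y \in X$, then invoking $\overline{\vspan S} = \bigl((\vspan S)^\perp\bigr)^\perp = H$---is the standard argument; it implicitly uses the fact that elements of an RKHS are genuine functions determined by their pointwise values, so the only function vanishing everywhere is the zero vector. The second half is also sound: continuity of $K$ makes $x \mapsto K(x,x)$ bounded on a compact set $C$, and the Cauchy--Schwarz estimate $|f(x) - g(x)| \le \|f-g\|_H \sqrt{K(x,x)}$ transfers Hilbert-norm density into uniform density on $C$. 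One cosmetic repair: if $K(x,x) \equiv 0$ on $C$ then $M_C = 0$ and your division by $\sqrt{M_C}$ is ill-posed; either observe that the bound is then trivial or replace $\sqrt{M_C}$ by $\sqrt{M_C} + 1$. The trade-off between the two routes is the usual one: the citation keeps the paper short and inherits the textbook's generality, while your argument exposes the mechanism---in particular, it makes clear that the lemma requires nothing about universality, only the reproducing property and (for the second claim) continuity of the kernel, which is worth knowing since the paper states the lemma in the middle of a discussion of universal RKHSs.
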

\begin{proof}
    See \cite[Theorem 4.21]{SCC.Steinwart.Christmann2008}.
\end{proof}
\section{Problem formulation}
Let $H(Y)$ be a real-valued RKHS of continuous functions over the set $Y$. Let $X$ and $D$ be compact subsets of $\mathbb{R}^n$, $U$ a compact subset of $\mathbb{R}^m$, $\Sigma := [0,T] \times X$, and $S = \Sigma \times U$. Throughout the rest of this manuscript the RKHSs $H\left(X\right)$, $H\left(D\right)$ and $H\left(\Sigma\right)$ denote the RKHSs obtained through the functions in $H\left(S\right)$ where the inputs have been projected to $X$, $D$, and $\Sigma$, respectively. Let $f:[0,T] \times \mathbb{R}^n \times \mathbb{R}^m \to \mathbb{R}^n$ be a locally Lipschitz function and consider the dynamical system 
\begin{equation}\label{eq:dynamics}
    \dot x = f(t,x,u), \qquad x(0) = x_0 \in \mathbb{R}^n.
\end{equation}
A state of the dynamical system corresponding to the initial condition $x_0$ and controller $u:[0,T] \mapsto \mathbb{R}^m$ will be written as $\phi_f(t;x_0,u)$.

For a fixed $T$, the optimal control problem is formulated as the need to minimize the cost
\begin{equation}\label{eq:costFunctional}
    J(x(\cdot),u(\cdot)) = \int_0^T h(t,x(t),u(t)) \mathrm{d}t + F(x(T)),
\end{equation}
for functions $h \in H(S)$ and $F \in H(D)$, over the set of differentiable functions $x:[0,T]\to\mathbb{R}^n$ and continuous functions $u[0,T]\to \mathbb{R}^m$ subject to the constraints \eqref{eq:dynamics}. For ease of exposition, the formulation considered here is more restrictive than strictly necessary. The methods developed in the following can be extended to include measurable control signals and absolutely continuous state trajectories.

In the following, occupation kernels and Liouville operators, first introduced in \cite{arXivSCC.Rosenfeld.Russo.ea2020} are utilized to lift the nonlinear optimal control problem into the space of infinite-dimensional linear programs.

\section{Occupation kernels and the cost functional}
Whenever $ (t,x(t),u(t)) \in S $ for all $ t \in [0,T] $, the functional $ g \mapsto \int_0^T g(t,x(t),u(t)) \mathrm{d}t $, that maps from $H(S)$ to $\mathbb{R}$, is linear and bounded. Indeed, given the kernel function $ K_S $ corresponding to $H(S)$, it can be seen that
\[
    \left| \int_0^T g(t,x(t),u(t)) \mathrm{d}t \right| \le \left| \int_0^T \langle g, K_S(\cdot,(t,x(t),u(t))) \rangle_{H(S)} \mathrm{d}t \right|\]
    \[\le  T \| g \|_{H(S)} \sup_{[0,T]} \sqrt{K_S((t,x(t),u(t)),(t,x(t),u(t)))} \le T \| g \|_{H(S)} \sup_{y \in S} \sqrt{K_S(y,y)}.
\]
As such, by the Reisz representation theorem, there exists a function $\Gamma_{x(\cdot),u(\cdot)} \in H(S)$ such that $\int_0^T g(t,x(t),u(t)) \mathrm{d}t = \langle g, \Gamma_{x(\cdot),u(\cdot)}\rangle_{H(S)}$. The function $\Gamma_{x(\cdot),u(\cdot)}$ is the \emph{occupation kernel} corresponding to the signals $x(\cdot)$ and $u(\cdot)$. Note that at this juncture, the signals $x(\cdot)$ and $u(\cdot)$ are independent, i.e., $x(\cdot)$ is not necessarily a trajectory of the dynamical system \eqref{eq:dynamics} in response to $u(\cdot)$.

The occupation kernel itself may be expressed as $\Gamma_{x(\cdot),u(\cdot)}(y) = \int_0^T K_S(y,(t,x(t),u(t))) \mathrm{d}t$. Moreover,
\begin{equation}\label{eq:occNorm}
    \| \Gamma_{x(\cdot),u(\cdot)}\|^2_{H(S)} = \int_0^T\int_0^T K((\tau,x(\tau),u(\tau)),(t,x(t),u(t)))d\tau \mathrm{d}t,
\end{equation}
and when $K(x,y) = \Phi(\|x-y\|_2)$ is a radial basis function, such as the Wendland RBF or the Gaussian RBF, \eqref{eq:occNorm} may be bounded as $\| \Gamma_{x_0,u}\|^2_{H(S)} \le T^2 \Phi(0)$

Using the occupation kernels and the reproducing property $\langle F,K_D(\cdot,y)\rangle H_D = F(y)$ of the kernel function $K_D \in H_D$ corresponding to the RKHS $H(D)$, the cost functional in \eqref{eq:costFunctional} can be expressed as
\begin{equation}\label{eq:occKerCostFunctional}
    J(x(\cdot),u(\cdot)) = \langle h, \Gamma_{x(\cdot),u(\cdot)} \rangle_{H(S)} + \langle F, K_D(\cdot, x(T)) \rangle_{H(D)}.
\end{equation}

Note that the cost functional is linear with respect to the kernels $\Gamma_{x(\cdot),u(\cdot)}$ and $K_D$. If the dynamical system that constrains $x(\cdot)$ to be a solution in response to $u(\cdot)$ can also be expressed as a linear constraint on the space of kernels, the optimal control problem can be posed as a linear program in the infinite dimensional kernel space. 

\section{System dynamics and the total derivative operator}
In the following, a formulation of the dynamics in terms of total derivative operators is developed to construct the aforementioned linear constraint.
\begin{definition}\label{def:totalderivative}
    Define the \emph{total derivative operator with symbol $f$} denoted by $A_f: \mathcal{D}(A_f) \to H(S)$ as $A_f g(t,x) := \frac{\partial}{\partial t} g(t,x) + f(t,x,u) \cdot \nabla_x g(t,x)$ where the domain $\mathcal{D}(A_f)$ is defined cannonically as
    \begin{equation}\label{eq:totalderivative}
        \mathcal{D}(A_f) = \{ g \in H(\Sigma) : A_f g \in H(S) \}.
    \end{equation}
\end{definition}
The total derivative operator is seldom a compact operator. As such, to analyze the relationship between the total derivative operator and the occupation kernels, the theory of densely defined operators is leveraged.
\begin{definition}[Densely Defined Operator]
    Given a set $\mathcal{D}(A)\subset H$, a linear operator $A : \mathcal{D}(A) \to H$ is said to be densely defined when $\mathcal{D}(A)$ is dense in $H$.
\end{definition}

Differentiation is a canonical example of a densely defined operator. The following example, while not posed over a RKHS, demonstrates this property of differentiation over the Hilbert space $L^2[0,1]$.

\begin{example}
    Let $A = \frac{d}{dt}$ and suppose that the Hilbert space in question is $L^2[0,1]$. Since the derivative of any polynomial is again a polynomial and polynomials are dense in $L^2[0,1]$, $\mathcal{D}(A) := \{ p : p \text{ is a polynomial over } [0,1]\}$ is a dense domain for $A$. It is also clear that $\mathcal{D}(A)$ cannot be extended to all of $L^2[0,1]$ as $f(t) = \sqrt{t}$ is in $L^2[0,1]$ and $\frac{d}{dt} f(t) = \frac{1}{2\sqrt{t}}$ is not.
\end{example}
The relationship between the total derivative operator and the occupation kernels is expressed through the adjoint of the total derivative operator, and for the development to be cogent, the adjoint needs to be densely defined. Since adjoints of closed operators over a Hilbert space are densely defined \cite[Chapter 5]{pedersen2012analysis}, closedness of the total derivative operator is analyzed in the following.
\begin{definition}
    Let $A$ be an operator over $H$. $A$ is said to be closed, if whenever $\{ g_m \}_{m=1}^\infty \subset A$, $g_n \to f$ and $Ag_n \to h$ according to the Hilbert space norm, then $f \in \mathcal{D}(A)$ and $Af = h$.
\end{definition}

The following theorem establishes a connection between the total derivative operator and signals $x(\cdot)$ and $u(\cdot)$ whenever $x(\cdot)$ is a solution of \eqref{eq:dynamics} under $u(\cdot)$. For brevity of notation, let $\Gamma_{x_0,u}$ denote the occupation kernel $\Gamma_{\phi_f(\cdot,x_0,u(\cdot)),u(\cdot)}$.

\begin{theorem}\label{thm:adjointdomain}
    The operator $A_f$ introduced in Definition \ref{def:totalderivative} is closed. Moreover, for an admissible trajectory $t \mapsto (t,x(t),u(t))$, with initial condition $x_0$, and that resides within a compact set for all $t \in [0,T]$, the function $\Gamma_{x_0,u}$ is in the domain of the adjoint of $A_f$.
\end{theorem}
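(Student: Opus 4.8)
The plan is to treat the two assertions separately, handling closedness first and then the membership of $\Gamma_{x_0,u}$ in $\mathcal{D}(A_f^*)$. For closedness, the key structural fact is that convergence in an RKHS norm implies pointwise convergence, since every evaluation functional $E_y$ is bounded. I would first record that for a sufficiently smooth kernel $K_\Sigma$ of $H(\Sigma)$ (e.g.\ the Gaussian or Wendland RBFs referenced earlier, which are continuously differentiable), the partial-derivative functionals $g \mapsto \partial_t g(y)$ and $g \mapsto \partial_{x_i} g(y)$ are likewise bounded on $H(\Sigma)$, with representers obtained by differentiating $K_\Sigma$ in its second argument. Granting this, suppose $\{g_m\} \subset \mathcal{D}(A_f)$ with $g_m \to g$ in $H(\Sigma)$ and $A_f g_m \to \tilde h$ in $H(S)$. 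Norm convergence in $H(\Sigma)$ then yields pointwise convergence of $g_m$, $\partial_t g_m$, and $\nabla_x g_m$ to $g$, $\partial_t g$, and $\nabla_x g$, respectively; combined with the continuity of $f$ this gives $A_f g_m(t,x,u) \to \partial_t g(t,x) + f(t,x,u)\cdot\nabla_x g(t,x) = A_f g(t,x,u)$ pointwise on $S$. On the other hand, norm convergence in $H(S)$ forces $A_f g_m \to \tilde h$ pointwise. By uniqueness of pointwise limits $A_f g = \tilde h$, and since $\tilde h \in H(S)$ this shows $g \in \mathcal{D}(A_f)$ with $A_f g = \tilde h$, which is exactly closedness.

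For the adjoint membership I would exploit the defining property of the occupation kernel together with the fundamental theorem of calculus. For any $g \in \mathcal{D}(A_f)$ the function $A_f g$ lies in $H(S)$, so the occupation-kernel identity gives $\langle A_f g, \Gamma_{x_0,u}\rangle_{H(S)} = \int_0^T (A_f g)(t,x(t),u(t))\,\mathrm{d}t$. Along an admissible trajectory $\dot x(t) = f(t,x(t),u(t))$, so the integrand collapses to the total derivative $\frac{\mathrm{d}}{\mathrm{d}t} g(t,x(t))$; since $g$ is $C^1$ and $x(\cdot)$ is $C^1$ (being the solution of the ODE with continuous $u$ and continuous $f$), the composite is absolutely continuous and the integral telescopes to $g(T,x(T)) - g(0,x_0)$. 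Hence $\langle A_f g, \Gamma_{x_0,u}\rangle_{H(S)} = g(T,x(T)) - g(0,x_0)$, a difference of two evaluation functionals on $H(\Sigma)$. Bounding each via Cauchy--Schwarz by $\|g\|_{H(\Sigma)}\sqrt{K_\Sigma(y,y)}$ at the respective boundary point $y$ shows that $g \mapsto \langle A_f g, \Gamma_{x_0,u}\rangle_{H(S)}$ is bounded with respect to $\|\cdot\|_{H(\Sigma)}$ on $\mathcal{D}(A_f)$, which is precisely the condition for $\Gamma_{x_0,u} \in \mathcal{D}(A_f^*)$; as a byproduct one reads off $A_f^*\Gamma_{x_0,u} = K_\Sigma(\cdot,(T,x(T))) - K_\Sigma(\cdot,(0,x_0))$.

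The main obstacle is the first part: the passage from norm convergence to convergence of the derivatives is where all the analytic content sits, and it genuinely requires regularity of $K_\Sigma$ beyond mere continuity. I would make the differentiability hypothesis on the kernel explicit (or invoke it from the RBF choice), since without bounded derivative-evaluation functionals the pointwise convergence of $\partial_t g_m$ and $\nabla_x g_m$ cannot be guaranteed and the closedness argument fails. By contrast, the adjoint-domain claim is essentially a clean telescoping computation once the occupation-kernel identity and the fundamental theorem of calculus are in hand, with boundedness following automatically from the reproducing property at the two endpoints of the trajectory.
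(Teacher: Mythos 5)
Your proposal is correct and follows essentially the same route as the paper: closedness via boundedness of the derivative-evaluation functionals (the paper cites \cite[Corollary 4.36]{SCC.Steinwart.Christmann2008} for exactly the kernel-regularity point you flag) giving pointwise convergence of $g_m$, $\partial_t g_m$, and $\nabla_x g_m$, and then adjoint membership via the occupation-kernel identity, the fundamental theorem of calculus along the trajectory, and a Cauchy--Schwarz bound through the reproducing property at the two endpoints. Your additional observations---making the pointwise-limit uniqueness explicit and reading off $A_f^*\Gamma_{x_0,u} = K_\Sigma(\cdot,(T,x(T))) - K_\Sigma(\cdot,(0,x_0))$---are minor refinements of, not departures from, the paper's argument.
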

\begin{proof}
    Suppose that $\{ g_m\}_{m=0}^\infty \subset \mathcal{D}(A_f) \subset H(\Sigma)$ such that $g_m \to g \in H(\Sigma)$ and $A_f g_m \to q \in H(S)$. Since the differentiability of the functions in $H$ is inherited from the kernel function (see \cite[Corollary 4.36]{SCC.Steinwart.Christmann2008}), the function $\frac{\partial}{\partial x_i} g$ is well defined for each $g\in H(\Sigma)$ (but $\frac{\partial}{\partial x_i} g$ is not necessarily a function in $H(\Sigma)$). However, for any fixed $t$ and $x$ the mapping $p \mapsto \frac{\partial}{\partial x_i} p(t,x)$ is a continuous linear functional over $H(\Sigma)$. By \cite[Corollary 4.36]{SCC.Steinwart.Christmann2008},
    \begin{gather*}
        \left| \frac{\partial}{\partial x_i} g_m(t,x) - \frac{\partial}{\partial x_i} g(t,x) \right| = \left| \frac{\partial}{\partial x_i}\left( g_m(t,x) -  g(t,x)\right)\right| \le \|g_m - g\|_{H(\Sigma)} \sqrt{\partial_i \partial_{i+n} K_\Sigma(((t,x)),((t,x)))}.
    \end{gather*}
    Hence, $\frac{\partial}{\partial x_i} g_m(t,x) \to \frac{\partial}{\partial x_i} g(t,x)$ for each $x \in X$ and $i=1,\ldots,n$. Hence, $\frac{\partial}{\partial t} g_m(t,x) + f(t,x,u) \cdot \nabla_x g_m(t,x) \to \frac{\partial}{\partial t} g(t,x) + f(t,x,u) \cdot \nabla_x g(t,x)$ as $f(t,x,u)$ is constant with respect to $m$. Thus, $h = Ag$ and $g \in \mathcal{D}(A_f)$, and $A_f$ is closed with the domain given in Definition \ref{def:totalderivative}.

    To demonstrate that $\Gamma_{x_0,u}$ is in the domain of $A_f^*$, note that
    \begin{gather*}
        \left| \int_0^T \frac{\partial}{\partial t}g(t,x(t)) + f(t,x(t),u(t)) \nabla_x g(t,x(t)) dt \right| = \left| \int_0^T \dot g(t,x(t)) dt \right| = \left| g(T,x(T)) - g(0,x(0)) \right|\\ = \left|\langle g, K_\Sigma(\cdot,(T,x(T))) - K_\Sigma(\cdot,(0,x(0)))\rangle_{H(\Sigma)} \right| \le \| g\|_{H(\Sigma)} \| K_\Sigma(\cdot,(T,x(T))) - K_\Sigma(\cdot,(0,x(0)))\|_{H(\Sigma)}.
    \end{gather*}

    Finally, given bounds on $T$ and $\|x(t)\|_2$, a bound on $\| K_\Sigma(\cdot,(T,x(T))) - K_\Sigma(\cdot,(0,x(0)))\|_{H(\Sigma)}$ may be established. Thus, the functional over $\mathcal{D}(A_f)$ given as $g \mapsto \langle A_fg, \Gamma_{x_0,u} \rangle$ is bounded when $t\mapsto (t,x(t),u(t))$ is a trajectory of the system.  It follows that the function $\Gamma_{x_0,u}$ is in the domain of the adjoint of the operator $A_f$. That is,
    \begin{equation}\label{eq:adjointinnerproduct}
        \langle A_f g, \Gamma_{x_0,u} \rangle_{H(S)} = \langle g, A_f^* \Gamma_{x_0,u} \rangle_{H(\Sigma)} = g(T,x(T)) - g(0,x(0))
    \end{equation}
    for all $ g \in \mathcal{D}(A_f)$.
\end{proof}
Through consideration of \eqref{eq:adjointinnerproduct} for an admissible trajectory satisfying the hypothesis of Theorem \ref{thm:adjointdomain}, $g \in \mathcal{D}(A_f)$ and setting $g_T(x) \equiv g(T,x) \in H(D)$, it can be observed that
\begin{gather*}\langle g, K_\Sigma(\cdot, (0,x_0) ) \rangle_{H(\Sigma)} = g(0,x(0)) = - \langle g, A_f^* \Gamma_{x_0,u} \rangle + g(T,x(T)) = \langle -A_fg,  \Gamma_{x_0,u} \rangle_{H(S)} + \langle g_T, K_D(\cdot,x(T))\rangle_{H(D)} \\
= \langle (-A_fg,g_T), (\Gamma_{x_0,u}, K_D(\cdot,x(T)))\rangle_{H(S) \times H(D)}.
\end{gather*}
Letting $\mathcal{L}_f: \mathcal{D}(A_f)  \to H(S) \times H(D)$ denote the linear mapping $\mathcal{L}_f g = \left( -A_f g, g_T \right)$, it follows that $ \langle g, \mathcal{L}_f^*( \Gamma_{x_0,u}, K_D(\cdot,x(T))) \rangle_{H(\Sigma)} = \langle g, K_\Sigma(\cdot, (0,x_0) ) \rangle_{H(\Sigma)} $ for all $g \in H(\Sigma)$. Hence, the system dynamics are encoded by the linear constraint
\begin{equation}\label{eq:adjointofL}
    \mathcal{L}_f^*( \Gamma_{x_0,u}, K_D(\cdot,x(T)) ) = K_\Sigma(\cdot, (0,x_0) ).
\end{equation}
\section{A reformulation of the optimal control problem}
Using \eqref{eq:occKerCostFunctional} and \eqref{eq:adjointofL}, the optimal control problem is expressed as an infinite dimensional linear program 
\begin{gather*}
    P : \min_{\Gamma_{x_0,u},K_D(\cdot,x(T))} \langle (\Gamma_{x_0,u},K_D(\cdot,x(T))), (h,F) \rangle_{H(S)\times H(D)}\\
    \text{subject to: } \mathcal{L}^*_{f}(\Gamma_{x_0,u},K_D(\cdot,x(T))) = K_\Sigma(\cdot,(0,x_0)).
\end{gather*}

To solve $P$, finite-dimensional representation of the decision variables $\Gamma_{x_0,u}$ and $K_D(\cdot,x(T))$ is required. The representation is cogent under the following assumptions.
\begin{assumption}\label{ass:denselyDefined}
    $A_f$ is densely defined on $H(\Sigma)$ together with a countable basis for $\mathcal{D}(A_f)$, given as $\{ \sigma_{m} \}_{m=1}^\infty \subset \mathcal{D}(A_f)$. Furthermore, for all $s\in S$, the kernel functions satisfy $K_S(\cdot,s)\in \mathcal{D}(A_f)$.
\end{assumption}
Under Assumption \ref{ass:denselyDefined}, the optimal control problem can be expressed as the need to find the optimal real valued weights $\{ w_i \}_{i=1}^{M_S}$ and $\{ v_i \}_{i=1}^{M_D}$ that provide approximations for $\Gamma_{x_0,u}$ and $K_D(\cdot,x(T))$ as
\begin{gather}
    \Gamma_{x_0,u}(\cdot) \approx \sum_{i=1}^{M_S} w_i K_S(\cdot,s_i)\\
    K_D(\cdot,x(T)) \approx \sum_{i=1}^{M_D} v_i K_D( \cdot, d_i),
\end{gather}
where $\{ s_i \}_{i=1}^{M_S} \subset S$ is a collection of center in $S$, and $\{ d_i \}_{i=1}^{M_D} \subset D$ is a collection of centers in $D$. The objective function of $P$ can then be evaluated as
\begin{multline}
    \langle (\Gamma_{x_0,u},K_D(\cdot,x(T))), (h,F) \rangle_{H(S)\times H(D)} \approx  \left\langle \left( \sum_{i=1}^{M_S} w_i K_S(\cdot,s_i),\sum_{i=1}^{M_D} v_i K_D( \cdot, d_i)\right), (h,F) \right\rangle_{ H(S)\times H(D)}\\ = \sum_{i=1}^{M_S} w_i \left\langle K_S(\cdot,s_i),h\right\rangle_{H(s)} + \sum_{i=1}^{M_D} v_i \left\langle K_D( \cdot, d_i),F \right\rangle_{H(D)} = \sum_{i=1}^{M_S} w_i h(s_i) + \sum_{i=1}^{M_D} v_i F(d_i).
\end{multline}

Similarly, the constraint in $P$ is satisfied provided $\langle \mathcal{L}_f g , (\Gamma_{x_0,u},K_D(\cdot,x(T))) \rangle_{ H(S)\times H(D)} = g(0,x_0)$ for all $g\in \mathcal{D}(A_f)$, which in turn, is satisfied provided $\langle \mathcal{L}_f \sigma_m , (\Gamma_{x_0,u},K_D(\cdot,x(T))) \rangle_{ H(S)\times H(D)} = \sigma_m(0,x_0)$ for all $m=1,\cdots,\infty$. Selecting a finite set of basis functions $ \{ \sigma_{1}, \ldots, \sigma_{M_{b}}\} $, the constraint of $P$ can thus be approximated using $M_b$ linear constraints of the form
\begin{equation}
    \sum_{i=1}^{M_D} v_i \sigma_m(T,d_i)  - \sum_{i=1}^{M_S} w_i A_f\sigma_m(s_i) = \sigma_m(0,x_0), \quad m=1,\cdots, M_b.
\end{equation}
The optimal control problem thus admits the finite-rank representation
\begin{gather*}
    P_f : \min_{\{ w_i \}_{i=1}^{M_S},\{ v_i \}_{i=1}^{M_D}} \sum_{i=1}^{M_S} w_i h(s_i) + \sum_{i=1}^{M_D} v_i F(d_i)\\
    \text{subject to: } \sum_{i=1}^{M_D} v_i \sigma_m(T,d_i)  - \sum_{i=1}^{M_S} w_i A_f\sigma_m(s_i) = \sigma_m(0,x_0),\qquad m=1,\cdots, M_b.
\end{gather*}
To ensure that the optimization problem is bounded, \eqref{eq:occNorm} may be employed as $\|\Gamma_{x_0,u}\|^2 \le T^2 \Phi(0),$ when $K_S$ is the Gaussian or Wendland RBF, and $\|K(\cdot,x(T))\|^2 \le \sup_{y \in D} K(y,y)$. Alternatively, $\Phi(0)$ may be replaced by an appropriate supremum bound. Depending on the selection of the kernel, a theoretically achievable approximation of $\Gamma_{x_0,u}$ and $K_D(\cdot,x(T))$ can be justified based on the density (or \emph{fill distance}) of the centers within their respective parent sets.

\section{Conclusion}
In this abstract, the concepts of occupation kernels and total derivative operators are utilized to lift a nonlinear optimal control problem into a linear infinite-dimensional optimal control problem over functions in a RKHS. A finite-rank representation of the infinite-dimensional problem is obtained using kernel functions of the RKHSs and a countable basis for the domain of the total derivative operator. The authors plan to include an expanded introduction that places this work in the context of other lifting techniques such as occupation measures, provide a procedure to extract the optimal value function from a solution of $P_f$, and add a few example problems that demonstrate the utility of the developed methods.

\section{Acknowledgements}
This research was supported by the Air Force Office of Scientific Research (AFOSR) under contract numbers FA9550-20-1-0127 and FA9550-21-1-0134, and the National Science Foundation (NSF) under award 2027976. Any opinions, findings and conclusions or recommendations expressed in this material are those of the author(s) and do not necessarily reflect the views of the sponsoring agencies.

\bibliography{sample}

\end{document}